\newcommand{\qDef}{{\mathcal Q}}
\newcommand{\BFGS}{{\small BFGS}}
\newcommand{\LBFGS}{{\small L-BFGS}}
\newcommand{\mystrut}{\vrule height9.5pt depth1.5pt width0pt}
\newcommand{\MSS}{{\small MSS}}
\newcommand{\MATLAB}{{\small MATLAB}}
\newcommand{\mgap}{\;\;}
\newcommand{\bgap}{\;\;\;}
\newcommand{\words}[1]{\mgap\text{#1}\mgap}
\newcommand{\defined}{\mathop{\,{\scriptstyle\stackrel{\triangle}{=}}}\,}
\newcommand{\minimize}[1]{{\displaystyle\minim_{#1}}}
\newcommand{\minim}{\mathop{\operator@font{minimize}}}
\newcommand{\etal}{et al.}  
\newcommand{\subject}{\mathop{\operator@font{subject\ to}}} 
\newcommand{\DO}{\hskip2pt\textbf{do}\hskip2pt}
\newcommand{\END}{\textbf{end}}
\newcommand{\FOR}{\textbf{for}}
\newcommand{\epssigma}{\varepsilon}
\newcommand{\strt}{\rule[-.5ex]{0pt}{3ex}}
\newcommand{\hstrt}{\rule[-1ex]{0pt}{3.5ex}}
\newcommand{\CUTEr}{{\small CUTE}r}
\newcommand{\ENDDO}{\textbf{end do}}
\newcommand{\ELSE}{\hskip2pt\textbf{else}\hskip2pt}
\newcommand{\trace}{\mathop{\mathrm{trace}}}
\newcommand{\IF}{\textbf{if}\hskip2pt}
\newcommand{\WHILE}{\textbf{while}\hskip2pt}
\newcommand{\RETURN}{\mathbf{return}}
\newcommand{\tab}{\par\noindent\mystrut}
\newcommand{\tabb}{\tab\hskip1.5em}
\newcommand{\tabbb}{\tab\hskip3.5em}
\newcommand{\tabbbb}{\tab\hskip5.5em}
\newcommand{\card}{\mathop{\mathrm{card}}}
\newcommand{\Sscr}{{\mathcal S}}
\newcommand{\Pscr}{{\mathcal P}}
\newcommand{\st}{:}
\begin{document}

\newtheorem{theorem}{Theorem}

\title[The MSS Method]{MSS: MATLAB
  Software for L-BFGS Trust-Region Subproblems for Large-Scale
  Optimization}

\author{Jennifer B. Erway}
\email{erwayjb@wfu.edu}
\address{Department of Mathematics, Wake Forest University, Winston-Salem, NC 27109}

\author{Roummel F.\ Marcia}
\email{rmarcia@ucmerced.edu}
\address{Appied Mathematics, University of California, Merced, Merced, CA 95343}

\date{\today}

\keywords{Large-scale unconstrained optimization, trust-region methods,
  limited-memory quasi-Newton methods, L-BFGS}

\thanks{J.~B. Erway is supported in part by National Science Foundation grant
CMMI-1334042.
R.~F. Marcia is supported in part by National Science Foundation grant
CMMI-1333326.}

\begin{abstract}
  A MATLAB implementation of the Mor\'{e}-Sorensen sequential (MSS) method
  is presented.  The MSS method computes the minimizer of a quadratic
  function defined by a limited-memory BFGS matrix subject to a two-norm
  trust-region constraint.  This solver is an adaptation of the
  Mor\'{e}-Sorensen direct method into an L-BFGS setting for large-scale
  optimization.  The MSS method makes use of a recently proposed stable
  fast direct method for solving large shifted BFGS systems of
  equations~\cite{ErwaM12a,ErwaM12c} and is able to compute solutions to
  any user-defined accuracy.  This MATLAB implementation is a matrix-free
  iterative method for large-scale optimization.  Numerical experiments on
  the CUTEr~\cite{BonCGT95,GouOT03}) suggest that using the MSS method as a
  trust-region subproblem solver can require significantly fewer
  function and gradient evaluations needed by a trust-region method as
  compared with the Steihaug-Toint method.
\end{abstract}

\maketitle

\section{Introduction}
In this paper we describe a \MATLAB{} implementation for minimizing a
quadratic function defined by a limited-memory \BFGS{} (\LBFGS) matrix 
subject to a two-norm constraint, i.e., for a given $x_k$,
\begin{equation} \label{eqn-trustProblem}
  \minimize{p\in\Re^n}\mgap\qDef(p) \defined g^Tp + \frac{1}{2} p^TB p
   \bgap\subject \mgap \|p\|_2 \le \delta,
\end{equation}
where $g\defined\nabla f(x_k)$, $B$ is an \LBFGS{} approximation to
$\nabla^2 f(x_k)$, and $\delta$ is a given positive constant.
Approximately solving (\ref{eqn-trustProblem}) is of interest to the
optimization community, as it is the computational bottleneck of
trust-region methods for large-scale optimization.

Generally speaking, there is a trade-off in computational cost per
subproblem and the number of overall trust-region iterations (i.e.,
function and gradient evaluations): The more accurate the subproblem
solver, the fewer overall iterations required.  Solvers that reduce the
overall number of function and gradient evaluations are of particular
interest when function (or gradient) evaluations are time-consuming, e.g.,
simulation-based optimization.  Some solvers such as the ``dogleg'' method
(see~\cite{Pow70c,Pow70d}) or the ``double-dogleg'' strategy
(see~\cite{DenM79}) compute an \emph{approximate} solution to
(\ref{eqn-trustProblem}) by taking convex combinations of the steepest
descent direction and the Newton step.  Other solvers seek an approximate
solution to the trust-region subproblem using an iterative approach.  In
particular, the Steihaug-Toint method computes an approximate solution to
(\ref{eqn-trustProblem}) that is guaranteed to achieve at least half the
optimal reduction in the quadratic function when the model is
convex~\cite{Yua00,GouLRT99}, but does not specifically seek to solve the
minimization problem to high accuracy when the solution lies on the
boundary.  This paper presents an algorithm to solve
(\ref{eqn-trustProblem}) to any user-defined accuracy.

Methods to solve the trust-region subproblem to high accuracy are often
based on optimality conditions
given in the following theorem (see, e.g., Gay~\cite{Gay81},
Sorensen~\cite{Sor82}, Mor\'e and Sorensen \cite{MorS83} or Conn, Gould and
Toint \cite{ConGT00a}):
\begin{theorem}\label{thrm-optimality}
  Let $\delta$ be a positive constant.  A vector $p^*$ is a global
  solution of the trust-region subproblem (\ref{eqn-trustProblem}) if and only
  if $\|p^*\|_2\leq \delta$ and there exists a unique $\sigma^*\ge 0$ such
  that $B+\sigma^* I$ is positive semidefinite and
\begin{equation}\label{eqn-optimality}
(B+\sigma^* I)p^*=-g \mgap \words{and} \mgap \sigma^*(\delta-\|p^*\|_2)=0.
\end{equation}
Moreover, if $B+\sigma^* I$ is positive definite, then the global
minimizer is unique.
\end{theorem}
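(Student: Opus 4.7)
My plan is to treat the two directions separately and then handle uniqueness.

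For \emph{sufficiency} (the conditions imply global optimality) I would introduce the auxiliary quadratic
$\widetilde{\qDef}(p) \defined g^T p + \tfrac{1}{2} p^T(B+\sigma^*I)p$,
which is convex because $B+\sigma^*I$ is positive semidefinite, and whose gradient vanishes at $p^*$ by the stationarity equation $(B+\sigma^*I)p^* = -g$. Hence $p^*$ is an unconstrained global minimizer of $\widetilde{\qDef}$. Rewriting $\widetilde{\qDef}(p) - \widetilde{\qDef}(p^*) \geq 0$ gives
\[
\qDef(p) - \qDef(p^*) \;\geq\; \tfrac{\sigma^*}{2}\bigl(\|p^*\|_2^2 - \|p\|_2^2\bigr).
\]
The complementarity condition $\sigma^*(\delta - \|p^*\|_2)=0$ forces either $\sigma^*=0$ or $\|p^*\|_2 = \delta$, and in both cases the right-hand side is nonnegative for any feasible $p$ (i.e., $\|p\|_2\le\delta$). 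This is the easy half.

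For \emph{necessity}, I would apply the KKT conditions to the constrained program. Stationarity of the Lagrangian $L(p,\sigma) = \qDef(p) + \tfrac{\sigma}{2}(\|p\|_2^2 - \delta^2)$ produces $(B+\sigma^*I)p^* = -g$ together with $\sigma^*\ge 0$ and complementarity. The real work is proving that $B+\sigma^*I$ is positive semidefinite. I expect this to be the main obstacle, since the classical second-order necessary condition only gives nonnegativity of $d^T(B+\sigma^*I)d$ along directions $d$ tangent to the active constraint ($d^T p^* = 0$). To promote this to all $d$ I would, given an arbitrary $d$ with $(p^*)^T d \neq 0$, construct a nontrivial $t\neq 0$ with $\|p^*+td\|_2 = \|p^*\|_2 = \delta$ by solving $2t\,(p^*)^T d + t^2\|d\|_2^2 = 0$. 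The point $p^*+td$ is feasible, so $\qDef(p^*+td)-\qDef(p^*)\ge 0$; expanding this difference and using stationarity produces an expression proportional to $d^T(B+\sigma^*I)d$, which must therefore be $\ge 0$. Combined with the tangential case this yields global semidefiniteness. (If $\sigma^*=0$ then $\|p^*\|_2<\delta$ is possible and unconstrained second-order conditions suffice directly.)

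For \emph{uniqueness of $\sigma^*$}, I would argue by contradiction: if two distinct multipliers $\sigma_1^*<\sigma_2^*$ both satisfied the conditions, then $B+\sigma_2^*I = (B+\sigma_1^*I)+(\sigma_2^*-\sigma_1^*)I$ would be positive definite, giving $p^* = -(B+\sigma_2^*I)^{-1}g$ uniquely; comparing norms with the analogous relation for $\sigma_1^*$ (using that $\|-(B+\sigma I)^{-1}g\|_2$ is strictly decreasing in $\sigma$ when $B+\sigma I\succ 0$) produces a contradiction with both complementarity conditions. Finally, if $B+\sigma^*I$ is positive definite, then $(B+\sigma^*I)p = -g$ has a unique solution, so $p^*$ itself is unique, which gives the final assertion. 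I expect the second-order semidefiniteness argument to be the only delicate step; the rest is bookkeeping with the KKT relations.
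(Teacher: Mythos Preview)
Your outline is correct and follows the classical argument due to Sorensen and Mor\'e--Sorensen. Note, however, that the paper itself does \emph{not} prove this theorem: it merely states it and cites Gay~\cite{Gay81}, Sorensen~\cite{Sor82}, Mor\'e--Sorensen~\cite{MorS83}, and Conn--Gould--Toint~\cite{ConGT00a} for the proof. So there is no ``paper's own proof'' to compare against; what you have written is essentially the standard proof found in those references.

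Two small remarks on your write-up. First, uniqueness of $\sigma^*$ is simpler than the monotonicity argument you sketch: if $\sigma_1^*$ and $\sigma_2^*$ both satisfy the conditions for the \emph{same} $p^*$, subtracting the two stationarity equations gives $(\sigma_1^*-\sigma_2^*)p^*=0$; if $\|p^*\|_2=\delta$ then $p^*\neq 0$ forces $\sigma_1^*=\sigma_2^*$, while if $\|p^*\|_2<\delta$ complementarity forces both to vanish. Second, for uniqueness of $p^*$ when $B+\sigma^*I\succ 0$, the line ``$(B+\sigma^*I)p=-g$ has a unique solution, so $p^*$ is unique'' tacitly assumes that any other global minimizer satisfies the same equation with the same $\sigma^*$. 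The clean way to close this is to use the identity you already derived,
\[
\qDef(p)-\qDef(p^*) \;=\; \tfrac{1}{2}(p-p^*)^T(B+\sigma^*I)(p-p^*) \;+\; \tfrac{\sigma^*}{2}\bigl(\|p^*\|_2^2-\|p\|_2^2\bigr),
\]
and observe that for a second global minimizer both nonnegative terms on the right must vanish, whence positive definiteness forces $p=p^*$.
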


The Mor\'{e}-Sorensen algorithm~\cite{MorS83} seeks $(p^*,\sigma^*)$ that
satisfy the optimality conditions (\ref{eqn-optimality}) by trading off
between updating $p$ and $\sigma$.  That is, each iteration, the method
updates $p$ (fixing $\sigma$) by solving the linear system $(B+\sigma
I)p=-g$ using the Cholesky factorization of the $B+\sigma I$; then,
$\sigma$ is updated using a safeguarded Newton method to find a root of
\begin{equation}\label{eqn-pole}
\phi(\sigma)\defined
\frac{1}{\|p(\sigma)\|_2} - \frac{1}{\delta}.
\end{equation}
The Mor\'{e}-Sorensen direct method is arguably the best direct method for
solving the trust-region subproblem; in fact, the accuracy of each solve
can be specified by the user.  While this method is practical for
smaller-sized problems, in large-scale optimization it is too
computationally expensive to compute and store Cholesky factorizations for
unstructured Hessians.

Several researchers have proposed adaptations of the Mor\'{e}-Sorensen
direct method into the limited-memory \BFGS{} setting.  Burke
\etal~\cite{BurWX96} derive a method via the Sherman-Morrison-Woodbury
formula that uses two $M\times M$ Cholesky factorizations, where $M$ is the
number of limited-memory updates.
While this technique is able to exploit properties of \LBFGS{} updates,
there are potential instability issues related to their proposed use of the
Sherman-Morrison-Woodbury that are not addressed.  Lu and
Monteiro~\cite{LuMo07} also explore a Mor\'{e}-Sorensen method
implementation when $B$ has special structure; namely, $B = D+VEV^T$, where
$D$ and $E$ are positive diagonal matrices, and $V$ has a small number of
columns.  Their approach uses the Sherman-Morrison-Woodbury formula to
replace solves with $(B+\sigma I)$ with solves with an $M\times M$ system
composed of a diagonal plus a low rank matrix, and thus, avoid computing
Choleksy factorizations.  Like with~\cite{BurWX96}, there are potential
stability issues that are not addressed regarding inverting the $M\times M$
matrix.  

Finally, Apostolopoulou \etal{}~\cite{ApoSP08,ApoSP11} derive a
closed-form expression for $(B+\sigma I )^{-1}$ to solve the first equation
in (\ref{eqn-optimality}).  The authors are able to explicitly compute the
eigenvalues of $B$, provided $M=1$~\cite{ApoSP08,ApoSP11} or
$M=2$~\cite{ApoSP11}.  While their formula avoids potential instabilities
associated the Sherman-Morrison-Woodbury formula, their formula is
restricted to the case when the number of updates is at most two.

\subsection{Overview of the proposed methods}
In this paper, we describe a new adaptation of the Mor\'{e}-Sorensen solver
into a large-scale \LBFGS{} setting.  The proposed method, called the
\emph{Mor\'{e}-Sorensen sequential (\MSS) method}, is able to exploit the
structure of \BFGS{} matrices to solve the shifted \LBFGS{} system in
(\ref{eqn-optimality}) using a fast direct recursion method that the
authors originally proposed in~\cite{ErwaM12a}.  (This recursion was later
proven to be stable in~\cite{ErwaM12c}.)  The \MSS{} method is able to
solve (\ref{eqn-trustProblem}) to any prescribed accuracy.  A practical
trust-region implementation is given in the numerical results section;
less stringent implementations will be addressed in future work.


The paper is organized in five sections.  In Section~\ref{sec-back} we
review \LBFGS{} quasi-Newton matrices and introduce notation that will be
used for the duration of this paper.  Section~\ref{sec-numerical} includes
numerical results comparing the Mor\'{e}-Sorensen method and the \MSS{}
method.  Finally, Section~\ref{sec-conclusions} includes some concluding
remarks and observations.

\subsection{Notation and Glossary}\label{sec-notation}

Unless explicitly indicated, $\|\cdot\|$ denotes the vector two-norm or
its subordinate matrix norm.  
In this paper, all methods use \LBFGS{} updates and we assume they are
selected to ensure the quasi-Newton matrices remain sufficiently positive
definite (see, e.g., \cite{BurWX96}).

\section{Background}\label{sec-back}
In this section, we begin with an overview of the \LBFGS{} quasi-Newton
matrices described by Nocedal~\cite{Noc80}, defining notation that will be
used throughout the paper. 

The \LBFGS{} quasi-Newton method generates a
sequence of positive-definite matrices $\{B_j\}$ from a sequence of vectors
$\{y_j\}$ and $\{s_j\}$ defined as
$$y_j=\nabla f(x_{j+1})-\nabla f(x_j)\quad \words{and} \quad s_j=x_{j+1}-x_j,$$ 
where $j=0,\dots m-1$ where $m\leq M$, and $M$ is the maximum number of
allowed stored pairs $(y_j,s_j)$.  This method can be viewed as the \BFGS{}
quasi-Newton method where no more than the $M$ most recently computed
updates are stored and used to update an initial matrix $B_0$.  The \LBFGS{}
quasi-Newton approximation to the Hessian of $f$ is implicitly updated as
follows:
\begin{equation}
B_{m} = B_0 - \sum_{i=0}^{m-1} a_i {a_i}^T + \sum_{i=0}^{m-1} b_ib_i^T,
\label{eqn-bfgs}
\end{equation}
where 
\begin{equation}
a_i=\frac{B_is_i}{\sqrt{s_i^TB_is_i}}, 
\quad b_i = \frac{y_i}{\sqrt{y_i^T s_i}}, \quad 
B_0=\gamma_m^{-1} I, \label{eqn-bfgs-extra}
\end{equation}
and $\gamma_m>0$ is a constant.  In practice, $\gamma_m$ is often defined to
be $\gamma_m\defined s_{m-1}^Ty_{m-1}/\|y_{m-1}\|^2$ (see,
e.g.,~\cite{LiuN89} or~\cite{Noc80}).  In order to maintain that the sequence
$\{B_i\}$ is positive definite for $i=1,\ldots m$, each of the accepted
pairs must satisfy $y_i^Ts_i>0$ for $i=0,\ldots, m-1$.

%

One of the advantages of using an \LBFGS{} quasi-Newton is that there is an
efficient recursion relation to compute products with $B_m^{-1}$.  Given a
vector $z$, the following algorithm~\cite{Noc80,NocW06} terminates with
$r\defined B_m^{-1}z$: \newline \newline

\begin{Pseudocode}{Algorithm 1: Two-loop recursion to compute $r=B_m^{-1}z$}
  \tab $q\leftarrow z$;
  \tab \FOR\ $k=m-1,\dots,0$
  \tabb $\rho_k \leftarrow 1/(y_k^Ts_k)$;
  \tabb $\alpha_k\leftarrow \rho_ks_k^Tq$;
  \tabb $q \leftarrow q-\alpha_ky_k$;
  \tab \END\ 
  \tab $r\leftarrow B_0^{-1} q$;
  \tab \FOR\ $k=0,\ldots, m-1$
  \tabb  $\beta \leftarrow \rho_ky_k^Tr$;
  \tabb  $r \leftarrow r+(\alpha_k-\beta)s_k$:
  \tab \END\
\end{Pseudocode}

The two-term recursion formula requires at most $\mathcal{O}(Mn)$
multiplications and additions.  To compute products with the \LBFGS{}
quasi-Newton matrix, one may use the so-called ``unrolling'' formula, which
requires $\mathcal{O}(M^2n)$ multiplications, or one may use a compact
matrix representation of the \LBFGS{} that can be used to compute products
with the \LBFGS{} quasi-Newton matrix, which requires $\mathcal{O}(Mn)$
multiplications (see, e.g., ~\cite{NocW06}).  Further details on \LBFGS{}
updates can found in~\cite{NocW06}; further background on the BFGS{}
updates can be found in~\cite{DenS96}.

Without loss of generality, for the duration of the paper we assume that
$B$ is a symmetric positive-definite quasi-Newton matrix formed using $m$
($m\le M$) \LBFGS{} updates.

\section{The Mor\'{e}-Sorensen Sequential (\MSS) Method}\label{sec-mss}
In this section, we present the \MSS{} method to solve the constrained
optimization problem (\ref{eqn-trustProblem}).
We begin by considering the Mor\'{e}-Sorensen direct
method proposed in~\cite{MorS83}.

The Mor\'{e}-Sorensen direct method seeks a pair $(p,\sigma)$ that satisfy
the optimality conditions (\ref{eqn-optimality}) by alternating between
updating $p$ and $\sigma$.  In the case that the $B$ 
is positive definite (as in \LBFGS{} matrices), the method
simplifies to Algorithm 2~\cite{MorS83}.

\begin{Pseudocode}{Algorithm 2: Mor\'{e}-Sorensen Method}
\tab $\sigma\gets 0; \mgap p\gets -B^{-1}g$;
\tab \IF\ $\|p\|\leq \delta$ 
\tabb  $\RETURN$;
\tab \ELSE\     
\tabb   \WHILE\ $not$ converged \DO\
\tabbb     Factor $B+\sigma I= R^T R$;
\tabbb    Solve $R^TRp = -g$;
\tabbb    Solve $R^Tq = p$;
\tabbb    $\sigma \gets \sigma + \frac{\|p\|^2}{\|q\|^2}\frac{\|p\|-\delta}{\delta}$;
\tabb   \ENDDO\
\tab \END
\end{Pseudocode}

The update to $\sigma$ in Algorithm 2 can be shown to be
Newton's method applied (\ref{eqn-pole}).  (Since $B$ is positive definite,
a safeguarded Newton's method is unnecessary.)  Convergence is predicated on
solving the optimality conditions (\ref{eqn-optimality}) to a prescribed
accuracy.  The only difficulty in implementing the Mor\'{e}-Sorensen method
in a large-scale setting is the shifted solve $(B+\sigma I)p=-g$.

One method to directly solve systems of the form $(B+\sigma I)x=y$ is to
view the system matrix as the sum of $\sigma I$ and rank-one \LBFGS{}
updates to an initial diagonal matrix $B_0$.  It is important to distinguish
between \emph{applying} rank-one \LBFGS{} updates to $B_0+\sigma I$ and
\emph{viewing} the system matrix as the sum of rank-one updates to
$B_0+\sigma I$.  To compute $(B+\sigma I)^{-1}y$, one cannot simply
substitute $B_0+\sigma I$ in for $B_0$ in Algorithm 1.
(For a discussion on this, see~\cite{ErwaM12a}).  In ~\cite{ErwaM12a},
Erway and Marcia present a stable fast direct method for solving \LBFGS{}
systems that are shifted by a constant diagonal matrix (stability is shown
in~\cite{ErwaM12c}).  Specifically, it is shown that products with
$(B+\sigma I)^{-1}$ can be computed provided $\gamma\sigma$ is bounded away
from zero, where $B_0\defined \gamma^{-1} I$.  The following theorem
is found in~\cite{ErwaM12a}:

\begin{theorem} Let $\gamma>0$ and $\sigma\ge 0$.  Suppose
  $G=(\gamma^{-1}+\sigma )I$, and let $H=\sum_{i=0}^{2m-1} E_i$, where 
$$E_0=-a_0a_0^T, \mgap
E_1=b_0b_0^T, \mgap \ldots, \mgap E_{2m-2}=-a_{m-1}a_{m-1}^T, \mgap
E_{2m-1}=b_{m-1}b_{m-1}^T,$$ with $\{a_i\}$ and $\{b_i\}$ are defined as
in (\ref{eqn-bfgs-extra}).  Further, define $C_{m+1}=G+E_0+\cdots + E_m$.
If there exists some $\epsilon>0$ such that $\gamma\sigma>\epsilon$, then
$B+\sigma I=G+H$, and $(B+\sigma I)^{-1}$ is given by
$$(B+\sigma I)^{-1}=C_{2m-1}^{-1}-v_{2m-1}C_{2m-1}^{-1}E_{2m-1}C_{2m-1}^{-1}$$
where
$$C_{i+1}^{-1}=C_i^{-1}-v_iC_i^{-1}E_iC_{i}^{-1}, \mgap i=0,\ldots, 2m-1,$$
and
  $$v_i = \frac{1}{1+\trace \left(C_i^{-1}E_i\right) }.$$
\end{theorem}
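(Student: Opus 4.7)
The plan is to iteratively apply the rank-one Sherman--Morrison formula to the sequence of updates $E_0,\ldots,E_{2m-1}$, telescoping the result into a closed-form expression for $(B+\sigma I)^{-1}$. First I would verify the algebraic decomposition $B+\sigma I=G+H$: substituting $B_0=\gamma^{-1}I$ into (\ref{eqn-bfgs}) and adding $\sigma I$ to both sides gives
$$B+\sigma I=(\gamma^{-1}+\sigma)I-\sum_{i=0}^{m-1}a_ia_i^T+\sum_{i=0}^{m-1}b_ib_i^T,$$
which by the definitions of $G$ and the interlaced sequence $E_0,\ldots,E_{2m-1}$ is precisely $G+H$.

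Next I would introduce the intermediate matrices $C_0=G$ and $C_{i+1}=C_i+E_i$, so that $C_{2m}=B+\sigma I$. Each $E_i$ is a symmetric rank-one matrix of the form $\pm w_iw_i^T$, with $w_i$ equal to either $a_j$ or $b_j$. Assuming $C_i$ is invertible and the scalar $1+\trace(C_i^{-1}E_i)$ is nonzero, the rank-one Sherman--Morrison identity gives
$$C_{i+1}^{-1}=(C_i+E_i)^{-1}=C_i^{-1}-\frac{C_i^{-1}E_iC_i^{-1}}{1+\trace(C_i^{-1}E_i)}=C_i^{-1}-v_iC_i^{-1}E_iC_i^{-1},$$
using the identity $\pm w_i^TC_i^{-1}w_i=\trace(C_i^{-1}E_i)$ valid for any rank-one $E_i=\pm w_iw_i^T$. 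Cascading this recursion from $i=0$ through $i=2m-1$ yields the closed-form expression for $(B+\sigma I)^{-1}$ stated in the theorem; the final application (for $i=2m-1$) is precisely the last equation displayed in the theorem statement.

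The main obstacle is establishing well-posedness of the recursion: each $C_i$ must be invertible and each denominator $1+\trace(C_i^{-1}E_i)$ must be nonzero. This is genuinely delicate, since a direct computation yields $\|a_0\|^2=\gamma^{-1}$, so when $\sigma=0$ the matrix $C_1=\gamma^{-1}I-a_0a_0^T$ is already singular; the hypothesis $\gamma\sigma>\epsilon$ therefore cannot be dropped. To handle this I would use the fact that shifting by $\sigma I$ raises every eigenvalue of $G$ by $\sigma$, and while subtracting $a_ia_i^T$ reduces the minimum eigenvalue of $C_i$ by at most $\|a_i\|^2$, the curvature condition $y_i^Ts_i>0$ combined with $\gamma\sigma>\epsilon$ should yield a uniform positive lower bound on the eigenvalues of every $C_i$, and hence the required invertibility. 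I expect this spectral control is exactly the content of the companion stability analysis in~\cite{ErwaM12c}.
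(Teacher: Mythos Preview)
Your proposal is correct and is essentially the standard argument. The paper itself does not give a proof here; it simply cites \cite{ErwaM12a}. Your cascade of Sherman--Morrison updates, together with the observation that $\trace(C_i^{-1}E_i)=\pm w_i^TC_i^{-1}w_i$ for a rank-one $E_i=\pm w_iw_i^T$, is exactly the derivation one expects in that reference. You also correctly isolate the only nontrivial point---invertibility of each $C_i$ and nonvanishing of $1+\trace(C_i^{-1}E_i)$---and correctly note both that this fails at $\sigma=0$ (your $C_1$ example) and that the hypothesis $\gamma\sigma>\epsilon$ together with the stability analysis of \cite{ErwaM12c} is what closes the gap.
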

\begin{proof}
See~\cite{ErwaM12a}.
\end{proof}

This theorem is the basis for the following algorithm derived
in~\cite{ErwaM12a} to compute products with $(B+\sigma I)^{-1}$.
The algorithm was shown to be stable in~\cite{ErwaM12c}.

\begin{Pseudocode}{Algorithm 3: Recursion to compute $x=(B+\sigma I)^{-1}y$}
\tab $x \leftarrow (\gamma^{-1} + \sigma)^{-1}y$;
\tab \FOR\ $k = 0, \dots, 2m-1$
\tabb  \IF\ $k$ even 
\tabbb     $c \leftarrow a_{k/2}$;
\tabb   \ELSE\ 
\tabbb      $c \leftarrow b_{(k-1)/2}$;
\tabb   \END\
\tabb $r_k \leftarrow (\gamma^{-1}+\sigma)^{-1}c$;
\tabb  \FOR\ $i = 0, \dots, k-1$
\tabbb    $r_k \leftarrow r_k + (-1)^iv_i (r_i^Tc) r_i$;
\tabb  \END\
\tabb $v_k \leftarrow 1/(1 + (-1)^{k+1}r_k^Tc)$;
\tabb $x \leftarrow x + (-1)^{k}v_k (r_k^Ty) r_k$;
\tab \END\
\end{Pseudocode}

It is important to note that $\{a_i\}$ and $\{b_i\}$ need to be
precomputed.  Algorithm 3 requires at most
$\mathcal{O}(M^2n)$.  Operations with $C_0=B_0+\sigma I$ and $C_1$ can be
easily computed with minimal extra expense since $C_0^{-1}$ is a diagonal
matrix.  It is generally known that $M$ may be kept small (for example,
Byrd et al.~\cite{ByrNS94} suggest $M\in [3,7]$).  When $M^2\ll n$, the
extra storage requirements and computations are affordable.

 \subsection{Handling small $\sigma$}

$\epssigma$
 In this section we discuss the case of solving $(B+\sigma I)p=-g$ for
 small $\sigma$.  For stability, it is important to maintain
 $\gamma\sigma>\epssigma$ for a small positive $\epssigma$.
 Thus, in order to use Algorithm 3, we require that both
 $\gamma>\sqrt{\epssigma}$ and $\sigma>\sqrt{\epssigma}$.  The
 first requirement is easily met by thresholding $\gamma$, i.e.,
 $\gamma\gets\max\{\sqrt{\epssigma}, \gamma \}$.  (Recall that
 $y_i^Ts_i>0$ for each $i=0,\ldots m-1$, and thus, $\gamma>0$.)  When
 $\sigma\le\sqrt{\epssigma}$, we set $\sigma=0$ and use the two-loop
 recursion (Algorithm 1) to solve an unshifted \LBFGS{}
 system.

\subsection{The algorithm}
The MSS method adapts the Mor\'{e}-Sorensen method into an \LBFGS{} setting
by solving $(B+\sigma I)p=-g$ using a recursion method instead of a
Cholesky factorization.  When $\sigma$ is sufficiently large, the recently
proposed recursion algorithm (Algorithm 3) is used to
update $s$; otherwise, $\sigma\approx 0$ and the two-loop recursion
(Algorithm 1) is used.  Also, note that the
Mor\'{e}-Sorensen method updates $\sigma$ using Newton's method applied to
(\ref{eqn-pole}), i.e., 
\begin{equation}\label{eqn-update}
\sigma\gets\sigma-\phi(p)/\phi'(p).
\end{equation} 
In Algorithm 2 this update is written in terms of the Cholesky
factors.  In the \MSS{} algorithm, Cholesky factors are unavailable, and
thus, the update to $\sigma$ is performed by explicitly computing the
Newton step (\ref{eqn-update}).  For details on this update
see~\cite{ConGT00a}.

The \MSS{} method is summarized in Algorithm 4:

\begin{Pseudocode}{Algorithm 4: Mor\'{e}-Sorensen Sequential (\MSS) Method}\\
{\bf Input:} $\gamma>\sqrt{\epssigma}$ where $0<\epssigma \ll 1$\\
{\bf Output:} $(p,\sigma)$ \\
$\sigma\gets 0; \mgap p\gets -B^{-1}g$;
\tab \IF\ $\|p\|\leq \delta$
\tabb $\RETURN$;
\tab \ELSE\
\tabb  \WHILE\ $not$ converged \DO\
\tabbb    $\phi(p) \gets 1/\|p\| - 1/\delta;$
\tabbb    \IF\ $\sigma>\sqrt{\epssigma}$
\tabbbb   Compute $\hat{p}$ such that $(B+\sigma)\hat{p}=-p$ using Algorithm 3;
\tabbb     \ELSE\
\tabbbb        Compute $\hat{p}$ such that $B\hat{p}=-p$ using Algorithm 1;
\tabbbb        $\sigma\gets 0$;
\tabbb     \END\
\tabbb  $\phi'(p)\gets -(p^T\hat{p})/ \|p\|^3$;
\tabbb  $\sigma\gets \sigma - \phi(p) /\phi'(p)$;
\tabbb  \IF\ $\sigma>\sqrt{\epssigma}$
\tabbbb     Compute $p$ such that $(B+\sigma)p=-g$ using Algorithm 3;
\tabbb  \ELSE\
\tabbbb    Compute $p$ such that $Bp=-g$ using Algorithm 1;
\tabbbb    $\sigma\gets 0$;  
\tabbb  \END\
\tabb \END\
\tab \END\
\end{Pseudocode}

\subsection{Stopping criteria}

If the initial solution $p = -B^{-1}g$ is within the trust-region, i.e., $\|
p \| \le \delta$, the \MSS{} method terminates.  Otherwise, the \MSS{}
method iterates until the solution to \eqref{eqn-trustProblem} is
sufficiently close to the constraint boundary, i.e., $| \| p \| - \delta |
\le \tau \delta$, for a fixed $\tau \ll 1$.


 \section{Numerical Results}\label{sec-numerical}
 We test the efficiency of the MSS method by solving large problems from
 the \CUTEr{} test collection (see 
 \cite{BonCGT95,GouOT03}).  
 The test set was constructed using the \CUTEr{} interactive
 \texttt{select} tool, which allows the identification of groups of
 problems with certain characteristics.  In our case, the \texttt{select}
 tool was used to identify the twice-continuously differentiable
 unconstrained problems for which the number of variables can be varied.
 This process selected 67 problems:
\texttt{arwhead}, 
\texttt{bdqrtic}, 
\texttt{broydn7d}, 
\texttt{brybnd}, 
\texttt{chainwoo}, 
\texttt{cosine}, 
\texttt{cragglvy}, 
\texttt{curly10}, 
\texttt{curly20}, 
\texttt{curly30}, 
\texttt{dixmaana}, 
\texttt{dixmaanb}, 
\texttt{dixmaanc}, 
\texttt{dixmaand}, 
\texttt{dixmaane}, 
\texttt{dixmaanf}, 
\texttt{dixmaang}, 
\texttt{dixmaanh}, 
\texttt{dixmaani}, 
\texttt{dixmaanj}, 
\texttt{dixmaank}, 
\texttt{dixmaanl}, 
\texttt{dixon3dq}, 
\texttt{dqdrtic}, 
\texttt{dqrtic}, 
\texttt{edensch}, 
\texttt{eg2}, 
\texttt{engval1}, 
\texttt{extrosnb}, 
\texttt{fletchcr}, 
\texttt{fletcbv2}, 
\texttt{fminsrf2}, 
\texttt{fminsurf}, 
\texttt{freuroth}, 
\texttt{genhumps}, 
\texttt{genrose}, 
\texttt{liarwhd}, 
\texttt{morebv}, 
\texttt{ncb20}, 
\texttt{ncb20b}, 
\texttt{noncvxu2}, 
\texttt{noncvxun}, 
\texttt{nondia}, 
\texttt{nondquar}, 
\texttt{penalty1}, 
\texttt{penalty2}, 
\texttt{powellsg}, 
\texttt{power}, 
\texttt{quartc}, 
\texttt{sbrybnd}, 
\texttt{schmvett}, 
\texttt{scosine}, 
\texttt{scurly10}, 
\texttt{scurly20}, 
\texttt{scurly30}, 
\texttt{sinquad}, 
\texttt{sparsine}, 
\texttt{sparsqur}, 
\texttt{spmsrtls}, 
\texttt{srosenbr}, 
\texttt{testquad}, 
\texttt{tointgss}, 
\texttt{tquartic}, 
\texttt{tridia}, 
\texttt{vardim}, 
\texttt{vareigvl} and 
\texttt{woods}.  
The dimensions were selected so that $n\ge 1000$, with a default of
$n=1000$ unless otherwise recommended in the \CUTEr{} documentation.

The \MATLAB{} implementation of the \MSS{} method was tested against the
Steihaug-Toint method.
A basic trust-region algorithm (Algorithm 5) was
implemented based on (\cite[Algorithm 6.1]{GouLRT99}) with some extra
precautions based on (\cite{DenM79})
to prevent the trust-region radius from becoming too large.

\begin{Pseudocode}{Algorithm 5: Basic Trust-Region Algorithm}\\
{\bf Input:} $x_0$; $M>0$ (an integer); \,\, $\hat{\delta} \gg 0$; \,\,  $\gamma_1>1$; \,\, $\gamma_2\in (0,1)$; \,\,
$\delta_0>0$ \,\, and \,\, $\eta_1,\eta_2<1$,
such that $0<\eta_1<\eta_2<1$\\
{\bf Output:}  $x$ \\
$\delta\gets \delta_0$; $x\gets x_0$;\\
$B \gets I$; (implicit)\\
\WHILE\ $not$ converged \DO\
\tabb  Compute $p$ to (approximately) solve (\ref{eqn-trustProblem}); 
\tabb  Evaluate $f(x+p)$ and $g(x+p)$;
\tabb   $\rho\gets (f(x)-f(x+p))/(-g(x)^Tp-\frac{1}{2}p^TBp)$;
\tabb   \IF\ $\rho\ge\eta_1$ 
\tabbb    $x\gets x+p;$ \,\, $f(x)\gets f(x+p)$ \,\, $g(x)\gets g(x+p)$; 
\tabbb  \IF\ $\rho\ge\eta_2$ 
\tabbbb      $\delta\gets\min\{\gamma_1\|p\|,\hat{\delta}\}$;
\tabbb \ELSE\  
\tabbbb     $\delta\gets \|p\|$; 
\tabbb  \END\
\tabb   \ELSE\
\tabbb      (reject the update) 
\tabbb      $\delta\gets\gamma_2\delta$;
\tabb \END\
\tabb    Possibly update the pairs $\{(s_i,y_i)\}$, $i=1\ldots M$, to implicitly update $B$;\\
\END\
\end{Pseudocode}
 
The following termination criteria was used for the basic trust-region algorithm: 
\begin{equation}\label{eqn-bstr-termination}
\|g(x)\|<\max\{\tau\|f(x_0)\|,\tau\|g(x_0)\|, 1\times 10^{-5}\},\end{equation}
where $\tau \defined 1\times 10^{-6}$.  The trust-region
algorithm terminated unsuccessful whenever the trust-region radius
becomes too small or the number of function
evaluations exceeded $\max\{1000, n\}$, where $n$ is the problem size.

For these numerical experiments, the \LBFGS{} pairs were updated whenever the 
updates satisfied the following inequalities:
\begin{equation}\label{eqn-updateSY}
s_+^Ty_+<1/\sqrt{\epsilon} \quad \text{and} \quad
s_+^Ty_+>\sqrt{\epsilon},
\end{equation}
where $s_+ \defined p$, 
$y_+ \defined g(x+p)-g(x)$ 
and $\epsilon$ is machine precision in \MATLAB{} (i.e., \texttt{eps}).
The update conditions (\ref{eqn-updateSY}) were tested each iteration,
regardless of whether the update to $x$ was accepted.

Finally, the following constants were used for Algorithm 5:
$M\defined 5$, $\gamma_1\defined 2.0$, $\gamma_2\defined 0.5$, $\delta_0
\defined 1$, $\eta_1\defined 0.01$, $\eta_2\defined 0.95$, and
$\hat{\delta}\defined 1/(100\times \epsilon)$ where
$\epsilon$ is machine precision in \MATLAB{} (i.e., \texttt{eps}).  The initial
guess $x_0$ was the default starting point associated with each \CUTEr{}
problem.

\subsection{Termination criteria for the subproblem solvers}
The Steihaug-Toint truncated conjugate-gradient method as found
in~\cite{ConGT00a} was implemented in \MATLAB.  The maximum number of
iterations per subproblem was limited to the minimum of either $n$, the
size of each \CUTEr{} problem, or 100.  The subproblem solver was said to
converge at the $i$th iteration when the residual of the linear solve $r_i$
satisfied
$$\|r_i\|\leq\|g(x)\|\min\{0.1,\|g(x)\|^{0.1}\}$$ (see, e.g., ~\cite{ConGT00a}).

The \MSS{} method terminates whenever either of the following conditions hold
~\cite{MorS83}:
$$\|B^{-1}g\|\leq \delta \quad \text{or} \quad |\|p\|-\delta|\leq \tau_{ms}\delta,$$
where $\tau_{ms}$ is a small non-negative constant.  In the numerical
experiments, $\tau_{ms}\defined\sqrt{\epsilon}$, where $\epsilon$ denotes
\MATLAB{} machine precision.  Furthermore the value for $\epsilon$ in
Algorithm 4 was also taken to be \MATLAB{} machine precision.  The maximum
number of iterations per subproblem was limited to the minimum of $n$, the size of each
\CUTEr{} problem, or 100.

\subsection{Results}
Of the 67 \CUTEr{} problems the basic trust-region method combined with the
two subproblems solvers failed to solve the following problems:
\texttt{curly10}, 
\texttt{curly20}, 
\texttt{curly30}, 
\texttt{dixon3dq}, 
\texttt{fletchcr}, 
\texttt{genhumps}, 
\texttt{genrose}, 
\texttt{sbrybnd}, 
\texttt{scosine}, 
\texttt{scurly10}, 
\texttt{scurly20}, 
and
\texttt{scurly30}. 
The problems \texttt{fletcbv2} and \texttt{penalty2} satisfied the
convergence criteria (\ref{eqn-bstr-termination}) at the initial point
$x_0$ and thus, were removed from the test set.  Tables I and II
contain the results of the remaining 53 problems. 

\begin{table}[h]\label{table-cuter-1}
\caption{MSS method and Steihaug-Toint method on CUTEr problems A--E.} 
\centering
\begin{tabular}{|lc|cccccc|}
  \hline
  \multicolumn{2}{|c|}
  {Problem}  & \multicolumn{3}{c}    {MSS Method} &
\multicolumn{3}{c|} {Steihaug-Toint Method}\\
  name & $n$  &  Time & FEs & Inner Itns & Time & FEs & Inner Itns \\
  \hline

 \texttt{ARWHEAD   } &  5000 &  \texttt{1.11e-01} & \texttt{15} & \texttt{29} &
 \texttt{1.93e-02} & \texttt{15} & \texttt{24} \\  

 \texttt{BDQRTIC   } &  5000 &  \texttt{3.12e-01} & \texttt{40} & \texttt{69} &
 \texttt{1.06e-01} & \texttt{40} & \texttt{122} \\ 

 \texttt{BROYDN7D  } &  5000 &  \texttt{5.98e+00} & \texttt{1566} & \texttt{1014} &
 \texttt{7.67e+00} & \texttt{1618} & \texttt{8859} \\  

 \texttt{BRYBND    } &  5000 &  \texttt{2.56e-01} & \texttt{59} & \texttt{52} &
 \texttt{5.56e-02} & \texttt{24} & \texttt{66} \\ 

 \texttt{CHAINWOO  } &  4000 &  \texttt{4.18e-01} & \texttt{54} & \texttt{99} &
 \texttt{1.33e-01} & \texttt{46} & \texttt{167} \\ 

 \texttt{COSINE    } &  10000 &  \texttt{8.45e-03} & \texttt{14} & \texttt{14} &
 \texttt{1.02e-02} & \texttt{14} & \texttt{13} \\ 

 \texttt{CRAGGLVY  } &  5000 &  \texttt{2.78e-01} & \texttt{36} & \texttt{64} &
 \texttt{1.04e-01} & \texttt{39} & \texttt{122} \\ 

 \texttt{DIXMAANA  } &  3000 &  \texttt{2.32e-02} & \texttt{13} & \texttt{14} &
 \texttt{1.12e-02} & \texttt{13} & \texttt{17} \\ 

 \texttt{DIXMAANB  } &  3000 &  \texttt{2.25e-02} & \texttt{13} & \texttt{14} &
 \texttt{7.47e-03} & \texttt{13} & \texttt{12} \\ 

 \texttt{DIXMAANC  } &  3000 &  \texttt{2.32e-02} & \texttt{14} & \texttt{14} &
 \texttt{8.02e-03} & \texttt{14} & \texttt{13} \\ 

 \texttt{DIXMAAND  } &  3000 &  \texttt{2.54e-02} & \texttt{15} & \texttt{14} &
 \texttt{8.61e-03} & \texttt{15} & \texttt{14} \\ 

 \texttt{DIXMAANE  } &  3000 &  \texttt{1.76e-01} & \texttt{54} & \texttt{46} &
 \texttt{1.43e-01} & \texttt{52} & \texttt{213} \\ 

 \texttt{DIXMAANF  } &  3000 &  \texttt{4.61e-02} & \texttt{24} & \texttt{18} &
 \texttt{3.66e-02} & \texttt{24} & \texttt{55} \\ 

 \texttt{DIXMAANG  } &  3000 &  \texttt{2.97e-02} & \texttt{21} & \texttt{14} &
 \texttt{2.12e-02} & \texttt{21} & \texttt{33} \\ 

 \texttt{DIXMAANH  } &  3000 &  \texttt{2.96e-02} & \texttt{20} & \texttt{14} &
 \texttt{1.81e-02} & \texttt{20} & \texttt{27} \\ 

 \texttt{DIXMAANI  } &  3000 &  \texttt{4.99e-01} & \texttt{84} & \texttt{129} &
 \texttt{2.27e-01} & \texttt{75} & \texttt{340} \\ 

 \texttt{DIXMAANJ  } &  3000 &  \texttt{9.93e-02} & \texttt{28} & \texttt{30} &
 \texttt{4.82e-02} & \texttt{27} & \texttt{72} \\ 

 \texttt{DIXMAANK  } &  3000 &  \texttt{7.77e-02} & \texttt{24} & \texttt{27} &
 \texttt{3.70e-02} & \texttt{25} & \texttt{56} \\ 

 \texttt{DIXMAANL  } &  3000 &  \texttt{4.47e-02} & \texttt{22} & \texttt{18} &
 \texttt{1.97e-02} & \texttt{21} & \texttt{31} \\ 

 \texttt{DQDRTIC   } &  5000 &  \texttt{5.36e-02} & \texttt{11} & \texttt{20} &
 \texttt{7.12e-03} & \texttt{11} & \texttt{10} \\ 

 \texttt{DQRTIC    } &  5000 &  \texttt{6.54e-03} & \texttt{29} & \texttt{56} &
 \texttt{1.76e-02} & \texttt{29} & \texttt{28} \\ 

 \texttt{EDENSCH   } &  2000 &  \texttt{3.86e-02} & \texttt{22} & \texttt{23} &
 \texttt{1.74e-02} & \texttt{24} & \texttt{39} \\ 

 \texttt{EG2       } &  1000 &  \texttt{8.52e-04} & \texttt{5} & \texttt{2} &
 \texttt{9.72e-04} & \texttt{5} & \texttt{4} \\ 

 \texttt{ENGVAL1   } &  5000 &  \texttt{4.54e-02} & \texttt{17} & \texttt{17} &
 \texttt{1.63e-02} & \texttt{17} & \texttt{21} \\ 

 \texttt{EXTROSNB  } &  1000 &  \texttt{1.13e-01} & \texttt{39} & \texttt{67} &
 \texttt{5.78e-02} & \texttt{57} & \texttt{162} \\ 
\hline
\end{tabular}
\end{table}

\begin{table}[h]\label{table-cuter-2}
\caption{MSS method and Steihaug-Toint method on CUTEr problems F--W.}
\centering
\begin{tabular}{|lc|cccccc|}
  \hline
  \multicolumn{2}{|c|}
  {Problem}  & \multicolumn{3}{c}    {MSS Method} &
\multicolumn{3}{c|} {Steihaug-Toint Method}\\
  name & $n$  &  Time & FEs & Inner Itns & Time & FEs & Inner Itns \\
  \hline

 \texttt{FMINSRF2  } &  5625 &  \texttt{1.91e+00} & \texttt{569} & \texttt{268} &
 \texttt{3.26e+00} & \texttt{841} & \texttt{3513} \\ 

 \texttt{FMINSURF  } &  1024 &  \texttt{4.88e-01} & \texttt{241} & \texttt{210} &
 \texttt{7.68e-01} & \texttt{460} & \texttt{2182} \\ 

 \texttt{FREUROTH  } &  5000 &  \texttt{2.28e-01} & \texttt{36} & \texttt{61} &
 \texttt{6.68e-02} & \texttt{39} & \texttt{80} \\ 

 \texttt{LIARWHD   } &  5000 &  \texttt{1.95e-01} & \texttt{30} & \texttt{50} &
 \texttt{1.01e-01} & \texttt{69} & \texttt{116} \\ 

 \texttt{MOREBV    } &  5000 &  \texttt{1.70e-01} & \texttt{261} & \texttt{4} &
 \texttt{4.45e-01} & \texttt{262} & \texttt{703} \\ 

 \texttt{NCB20     } &  1010 &  \texttt{2.64e+00} & \texttt{772} & \texttt{1251} &
 \texttt{*} & \texttt{*} & \texttt{*} \\ 

 \texttt{NCB20B    } &  2000 &  \texttt{9.42e-02} & \texttt{49} & \texttt{38} &
 \texttt{1.34e-01} & \texttt{69} & \texttt{307} \\ 

 \texttt{NONCVXU2  } &  5000 &  \texttt{1.94e-01} & \texttt{19} & \texttt{53} &
 \texttt{1.45e-02} & \texttt{19} & \texttt{18} \\ 

 \texttt{NONCVXUN  } &  5000 &  \texttt{2.20e-01} & \texttt{19} & \texttt{55} &
 \texttt{1.60e-02} & \texttt{19} & \texttt{19} \\ 

 \texttt{NONDIA    } &  5000 &  \texttt{1.34e-03} & \texttt{4} & \texttt{2} &
 \texttt{2.09e-03} & \texttt{4} & \texttt{3} \\ 

 \texttt{NONDQUAR  } &  5000 &  \texttt{3.93e-01} & \texttt{47} & \texttt{85} &
 \texttt{1.41e-01} & \texttt{49} & \texttt{167} \\ 

 \texttt{PENALTY1  } &  1000 &  \texttt{2.89e-03} & \texttt{25} & \texttt{48} &
 \texttt{4.83e-03} & \texttt{25} & \texttt{24} \\ 

 \texttt{POWELLSG  } &  5000 &  \texttt{9.90e-02} & \texttt{29} & \texttt{28} &
 \texttt{5.33e-02} & \texttt{25} & \texttt{65} \\ 

 \texttt{POWER     } &  1000 &  \texttt{8.49e-03} & \texttt{37} & \texttt{56} &
 \texttt{1.15e-02} & \texttt{37} & \texttt{45} \\ 

\texttt{QUARTC    } &  5000 &  \texttt{6.38e-03} & \texttt{29} & \texttt{56} &
\texttt{9.98e-03} & \texttt{29} & \texttt{28} \\ 

 \texttt{SCHMVETT  } &  5000 &  \texttt{2.17e-01} & \texttt{46} & \texttt{48} &
 \texttt{6.68e-02} & \texttt{29} & \texttt{80} \\ 

 \texttt{SINQUAD   } &  5000 &  \texttt{2.73e-01} & \texttt{36} & \texttt{88} &
\, \texttt{3.23e-01}$^\dagger$ & \, \texttt{171}$^\dagger$ & \, \texttt{376}$^\dagger$ \\ 

 \texttt{SPARSINE  } &  5000 &  \texttt{6.72e-01} & \texttt{115} & \texttt{131} &
 \texttt{5.30e-01} & \texttt{126} & \texttt{620} \\ 

 \texttt{SPARSQUR  } &  10000 &  \texttt{4.77e-02} & \texttt{17} & \texttt{14} &
 \texttt{3.86e-02} & \texttt{18} & \texttt{30} \\ 

 \texttt{SPMSRTLS  } &  4999 &  \texttt{5.37e-01} & \texttt{114} & \texttt{107} &
 \texttt{5.05e-01} & \texttt{128} & \texttt{589} \\ 

 \texttt{SROSENBR  } &  5000 &  \texttt{1.51e-01} & \texttt{22} & \texttt{43} &
 \texttt{3.59e-02} & \texttt{27} & \texttt{41} \\ 

 \texttt{TESTQUAD  } &  5000 &  \texttt{2.77e-01} & \texttt{72} & \texttt{53} &
 \texttt{3.27e-01} & \texttt{78} & \texttt{387} \\ 

 \texttt{TOINTGSS  } &  5000 &  \texttt{5.06e-02} & \texttt{11} & \texttt{20} &
 \texttt{8.98e-03} & \texttt{12} & \texttt{12} \\ 

 \texttt{TQUARTIC  } &  5000 &  \texttt{2.34e+00} & \texttt{47} & \texttt{370} &
 \texttt{*} & \texttt{*} & \texttt{*} \\ 

 \texttt{TRIDIA    } &  5000 &  \texttt{2.19e+00} & \texttt{263} & \texttt{426} &
 \texttt{1.35e+00} & \texttt{291} & \texttt{1507} \\  

 \texttt{VARDIM    } &  200 &  \texttt{1.13e-03} & \texttt{12} & \texttt{22} &
 \texttt{2.01e-03} & \texttt{12} & \texttt{11} \\ 

 \texttt{VAREIGVL  } &  1000 &  \texttt{1.18e-01} & \texttt{31} & \texttt{70} &
 \texttt{2.65e-02} & \texttt{25} & \texttt{77} \\ 

 \texttt{WOODS     } &  4000 &  \texttt{1.25e-01} & \texttt{22} & \texttt{37} &
 \texttt{3.17e-02} & \texttt{22} & \texttt{39} \\ 

  \hline
\end{tabular}

\end{table}

Each table reports the total time spent in each subproblem solver for each
problem, the total number of function evaluations required using each
subproblem solver, and the total number of iterations performed by each
subproblem solver required to obtain a solution to the unconstrained
problem.  On the problems \texttt{ncb20} and \texttt{tquartic}, the
Steihaug-Toint method was unable to converge; this is denoted in the table
with asterisks.  On the problem \texttt{sinquad}, the basic-trust region
algorithm terminated early using the Steihaug-Toint method due to the
trust-region radius becoming too small ($3.78\times 10^{-14}$); this is
denoted in the table with a dagger.  It should be noted that it takes one
matrix-vector product per inner iteration for Steihaug's method; for
brevity's sake, only the number of inner iterations are reported.  It is
also worth noting that neither the \MSS{} method nor the Steihaug-Toint method
reached the limit on the maximum number of iterations per subproblem;
thus, each subproblem was solved before reaching the maximum number of
inner iterations.

Comparing the number of function evaluations and inner iterations between
Tables I and II, it appears that the more difficult problems occur in Table
II.  While the \MSS{} method often took slightly more time than the
Steihaug-Toint method, it often required fewer function evaluations on
these problems.  The results of Tables I and II are summarized in Table
III.  (The three problems on which the Steihaug-Toint method did not
converge were removed when generating Table III.)  The results suggest that
solving the trust-region subproblem more accurately using the \MSS{} method 
leads to fewer overall function evaluations for the overall trust-region
method.  This is consistent with other subproblem solvers that solve
trust-region subproblems to high accuracy~\cite{ErwGG09,ErwG09}.  Table III
also suggests that while the \MSS{} method took more time overall, it
solved the subproblems within a comparable time frame.

\begin{table}[ht]
\caption{Summary of results on CUTEr problems.}
\centering
\begin{tabular}{lcc}\hline
\hstrt                 & MSS method& Steihaug-Toint method\\ \hline
\strt  Problems solved     &    53   &     50   \\
\strt  Function evals      &  4359   &   4974  \\
\strt  Total time (subproblem) & 1.71e+01   &  1.68e+01  \\\hline
\end{tabular}

\end{table}

The results of Tables I and II are also
summarized using a performance profile (see Dolan and
Mor\'{e}~\cite{DolM02}).  Let $\card(\Sscr)$ denote the number of elements
in a finite set $\Sscr$.  Let $\Pscr$ denote the set of problems used for a
given numerical experiment.  For each method $s$ we define the function
$\pi_s \st [0,r_M]\mapsto\Re^+$ such that
$$
  \pi_s(\tau)
        = \dfrac{1}{\card(\Pscr)} \card(\{ p \in \Pscr \st \log_2(r_{p,s}) \le \tau \}),
$$
where $r_{p,s}$ denotes the ratio of the number of function evaluations
needed to solve problem $p$ with method $s$ and the least number of
function evaluations needed to solve problem $p$.  The number $r_M$ is the
maximum value of $\log_2(r_{p,s})$.  Figure~\ref{fig-fe2} depicts the
functions $\pi_s$ for each of the methods tested. All performance profiles
are plotted on a $\log_2$-scale.  Based on the performance profile, the
\MSS{} method appears more competitive in terms of function evaluations
than the Steihaug-Toint method for the \LBFGS{} application.

\begin{figure}[ht]
\begin{center}
\centering
\centerline{\psfig{figure=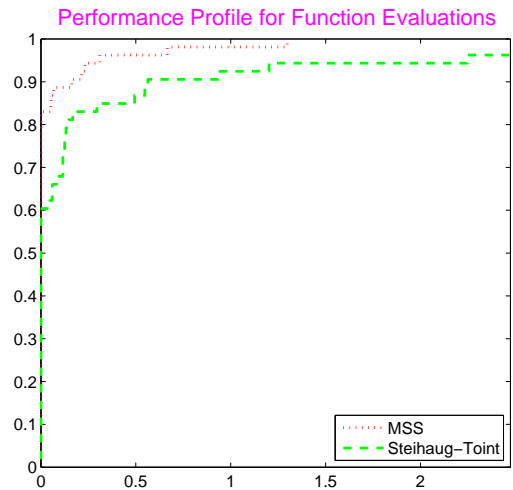,height=7cm}}
\parbox[t]{.85\textwidth}{
 \caption{\label{fig-fe2} \small Function evaluations for the \MSS{} and
Steihaug-Toint methods }}
\end{center}
\end{figure}

 \section{Concluding Remarks}\label{sec-conclusions}
 In this paper, we have presented a \MATLAB{} implementation of the \MSS{}
 algorithm that is able to solve problems of the form
 (\ref{eqn-trustProblem}).  This solver is stable and numerical results
 confirm that the method can compute solutions to any prescribed accuracy.
 Future research directions include practical
trust-region schemes that include less stringent requirements on
accepting updates to the \LBFGS{} matrix and improvements for cases when
L-BFGS pairs become linearly dependent, i.e., either the vectors $\{s_i\}$ or 
the vectors $\{y_i\}$, $i=1\ldots M$, become linearly dependent.

\bibliographystyle{abbrv}
\bibliography{my_refs}

\end{document}